\title{Superdensity and Bounded Geodesics in Moduli Space}
\author{Josh Southerland}
\address{Indiana University, Bloomington, IN 47405}
\email{jwsouthe@iu.edu}
\subjclass[2010]{37E35 (Primary) 30F30, 30F60 (Secondary)}
\begin{document}

 \begin{abstract}
  Following Beck-Chen, we say a flow $\phi_t$ on a metric space $(X, d)$ is \emph{superdense} if there is a $c > 0$ such that for every $x \in X$, and every $T>0$, the trajectory $\{\phi_t x\}_{0 \le t \le cT}$ is $1/T$-dense in $X$. We show that a linear flow on a translation surface is superdense if the associated Teichm\"uller geodesic is bounded. Conversely, if the linear flow is superdense, we show that along the Teichm\"uller geodesic, the diameter of the surface remains bounded. This generalizes work of Beck-Chen on \emph{lattice surfaces}, and is reminiscent of work of Masur on unique ergodicity.
 \end{abstract}

\setstretch{1.25}

\maketitle

 \section{Introduction}
 
 A \emph{translation surface} is an equivalence class of polygons (or finite sets of polygons) in the complex plane such that every side of a polygon is identified with a parallel side by translation. Two polygons (or sets of polygons) are equivalent provided we can cut one of the polygons and glue the pieces together to form the other polygon. Each translation surface has a global ``north" coming from the positive imaginary direction on the ambient plane. This direction is respected by the cutting and gluing equivalence relation. Translation surfaces are flat, away from a finite set of singular points. The singular points are cone points whose angles are integer multiples of $2\pi$~\cite{Wri15},~\cite{Zor06}. 
 
 There is a dynamical system commonly studied on translation surfaces: the linear flow $\Phi_t$ on the surface (Figure \ref{fig1}). This is the geodesic flow on the translation surface with the singular points removed. If a trajectory hits a singular point, we stop. 
 
  \begin{figure}[h]
   \centering
   \def\svgwidth{\columnwidth}
     \rotatebox{0}{\scalebox{0.3}{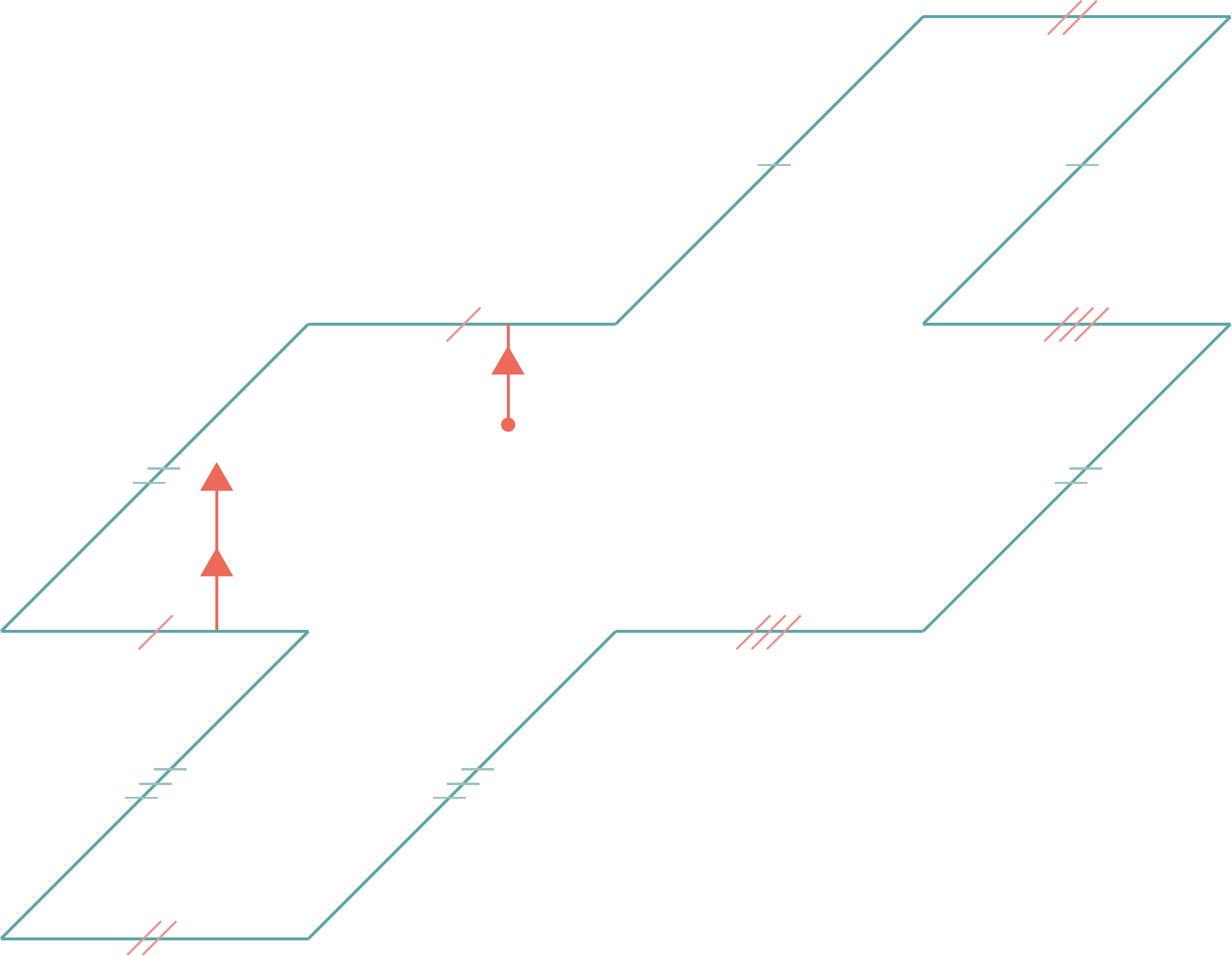}}
   \caption{Linear flow segment on a translation surface}
   \label{fig1} 
  \end{figure} 
 
 One motivation for studying such a system is its relationship to billiard trajectories on polygons with angles that are rational multiples of $\pi$. A billiard trajectory is a straight trajectory that ``bounces'' off the edges of the polygon following the rule that the angle of incidence is equal to the angle of reflection. Such polygons can be ``unfolded" to ``straighten" the billiard path. Since the angles are rational multiples of $\pi$, the number of directions a billiard path can follow is finite. After reflecting the polygon finitely many times, we arrive at only a single direction. The unfolded polygon is a translation surface, and the corresponding dynamical system is the linear flow~\cite{FK36},~\cite{KZ75},~\cite{Wri15},~\cite{Zor06}. 
 
 Equivalently, a \emph{translation surface} is a pair $(X,\omega)$ where $X$ is a compact, connected Riemann surface without boundary and $\omega$ a non-zero holomorphic differential on $X$. If we fix the genus of the underlying Riemann surface, the moduli space $\Omega_g$ of pairs $(X,\omega)$ forms a vector bundle over $\mathcal{M}_g$, the moduli space of genus $g$ Riemann surfaces, where the fiber over $X \in \mathcal{M}_g$ is the $g$-complex dimensional vector space $\Omega(X)$ of holomorphic $1$-forms on $X$. We will suppress the notation of the underlying Riemann surface and use the notation $\omega$ to denote a translation surface.
 
 The moduli space of translation surfaces is equipped with an $SL_2(\R)$-action, where the action is the usual linear action (in period coordinates). Elements of the form $g_t = \begin{bmatrix} e^{t} & 0 \\ 0 & e^{-t} \end{bmatrix}$ for any $t \in \R$ form a one-parameter subgroup which we will refer to as the (Teichm\"uller) \emph{geodesic flow} (Figure \ref{fig2}). 
 
 \begin{figure}[h]
   \centering
   \def\svgwidth{\columnwidth}
     \rotatebox{0}{\scalebox{0.6}{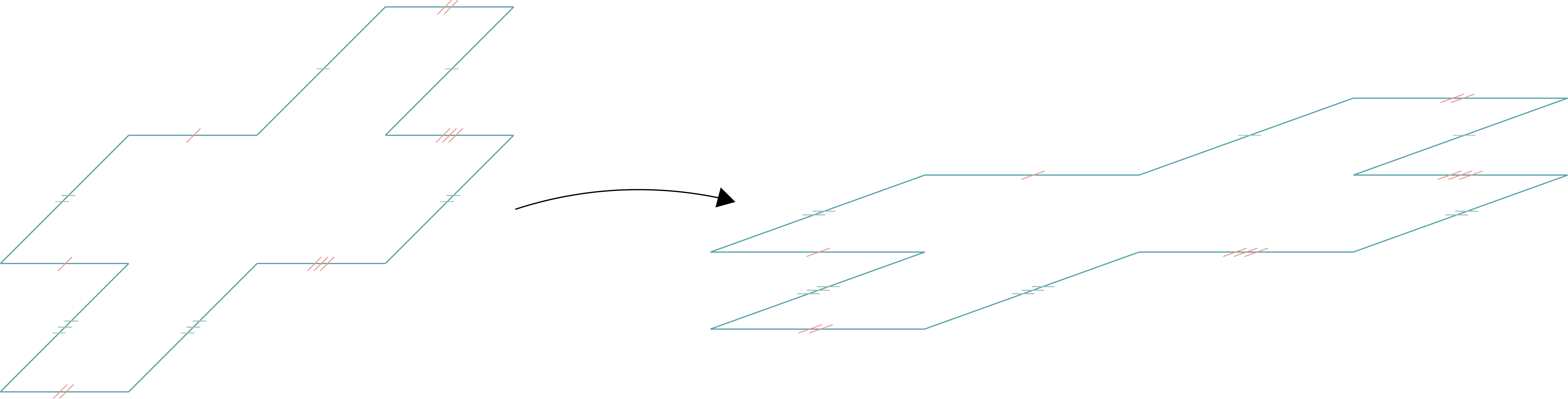}}
   \caption{Translation surface $\omega$ and $g_t \omega$}
   \label{fig2} 
  \end{figure} 
 
 There is a long history of interactions between the dynamical systems on individual translation surfaces and a dynamical system on the moduli space of translation surfaces, in particular, between the linear flow on a translation surface and geodesic flow on the moduli space.  Masur proved what is now known as Masur's Criterion by building on of earlier work with Kerckhoff and Smillie~\cite{KMS86},~\cite{M92},~\cite{MS91}. Masur used it as a tool to give an upper bound on the Hausdorff dimension of quadratic differentials whose vertical linear flow is not uniquely ergodic. 

 \begin{theorem*}[Masur's Criterion]
 Let $g_t$ denote the geodesic flow on the moduli space of translation surfaces and let $\omega$ be a translation surface. If $g_t \omega$ is non-divergent, that is, it returns to a compact set infinitely often, then the vertical straight line flow is uniquely ergodic. 
 \end{theorem*}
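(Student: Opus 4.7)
The plan is to prove the contrapositive: if the vertical flow on $\omega$ is not uniquely ergodic, then $g_t\omega$ is divergent (leaves every compact subset of moduli space eventually). The main geometric tool is Mumford's compactness criterion: a subset of moduli space is relatively compact if and only if there is a uniform positive lower bound on the length of the shortest saddle connection across all surfaces in the set. So it suffices to exhibit a sequence of times $t_n \to \infty$ on which $g_{t_n}\omega$ carries a saddle connection of flat length tending to zero.

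First I would reduce to interval exchange transformations. Choosing a horizontal segment $I$ transverse to the vertical flow, the first-return map $T\colon I \to I$ is an interval exchange, and unique ergodicity of the vertical flow on $\omega$ is equivalent to unique ergodicity of $T$. So the failure of unique ergodicity produces two distinct $T$-invariant probability measures $\mu_1,\mu_2$, and the signed measure $\mu_1-\mu_2$ provides a nontrivial cocycle that will be used to detect short cycles on the surface.

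The key geometric step is extracting short holonomy vectors. Running Rauzy--Veech induction on $T$ yields a nested sequence of subintervals $I_n \subs I$ with associated Rohlin tower decompositions of $\omega$ over $I_n$, and the two measures $\mu_1,\mu_2$ correspond to two distinct positive eigenvectors of the resulting matrix cocycle. Using the disagreement between these eigenvectors, one constructs relative homology cycles on $\omega$ -- realized as simple closed curves or saddle connections $\gamma_n$ -- whose horizontal and vertical holonomies $(x_n,y_n)$ satisfy $x_n y_n \to 0$. The flat length of $\gamma_n$ on $g_t\omega$ is then bounded above by $\sqrt{e^{2t}x_n^2 + e^{-2t}y_n^2}$, which at the optimal time $t_n := \tfrac{1}{2}\log(y_n/x_n)$ equals $2\sqrt{x_n y_n} \to 0$. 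This contradicts Mumford's criterion along the sequence $g_{t_n}\omega$.

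The main obstacle is the middle step: converting the measure-theoretic statement (two distinct ergodic invariant measures) into the geometric one (cycles with $x_n y_n \to 0$). This is the content of the Kerckhoff--Masur--Smillie analysis cited in the introduction, and I would invoke it essentially as a black box, focusing my own effort on the compactness/geodesic flow bookkeeping. If I wanted a more conceptual route, the Hubbard--Masur parametrization of measured foliations by relative cohomology offers one: two distinct transverse measures yield a nontrivial relative cohomology class that vanishes on the vertical foliation, and approximating this class by short integer cycles produces the required $\gamma_n$.
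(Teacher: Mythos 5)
The paper does not prove this statement: Masur's Criterion is quoted as background, with the proof deferred to \cite{KMS86}, \cite{M92}, \cite{MS91}. So your attempt has to be judged on its own terms, and there it has a genuine gap in the final step. Your construction produces a sequence of times $t_n \to \infty$ at which $g_{t_n}\omega$ carries a saddle connection of length $\sqrt{2x_ny_n} \to 0$ (note: $\sqrt{2x_ny_n}$, not $2\sqrt{x_ny_n}$, though both tend to zero). By Mumford's criterion this shows the forward geodesic is \emph{unbounded}. But the hypothesis of the theorem is only \emph{non-divergence} --- the geodesic returns to some compact set infinitely often --- so the contrapositive you must prove is \emph{divergence}: for every compact $K$ there is a $T$ with $g_t\omega \notin K$ for all $t > T$. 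Short curves at isolated times $t_n$ do not preclude the geodesic from re-entering a fixed compact set in between, so even granting every other step you have proved only the strictly weaker implication ``bounded forward geodesic $\Rightarrow$ uniquely ergodic'' (which happens to be all that the paper's Theorem \ref{main} and its corollary require, but it is not Masur's Criterion). To close the gap you would need the systole of $g_t\omega$ to go to zero for \emph{all} large $t$, e.g.\ by arranging that the time windows on which consecutive $\gamma_n$ are short overlap; the standard proof instead runs in the opposite direction, assuming $g_{t_n}\omega \to \omega'$ along some sequence and deriving a contradiction with the existence of two ergodic transverse measures from the convergence of the normalized vertical foliations.

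Separately, the middle step you invoke as a black box --- converting two distinct ergodic invariant measures into relative cycles with $x_n y_n \to 0$ --- is essentially the entire analytic content of the theorem (it is, conversely, an immediate consequence of the theorem together with Mumford compactness). Outsourcing it to Kerckhoff--Masur--Smillie leaves your write-up with only the elementary bookkeeping: the reduction to interval exchanges, the formula for flat length under $g_t$, and the choice of optimal time. That is a reasonable skeleton, but as a proof it is circular in spirit unless the extraction of short cycles is actually carried out.
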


 We identify a quantitative density condition on the vertical flow of a translation surface $\omega$ that is equivalent to boundedness of the associated geodesic in moduli space. The condition is inspired by papers of Beck and Chen, where they study billiard trajectories on similar objects~\cite{BC21},~\cite{BC22}. 
 
 \begin{definition}[Superdensity]
 Let $\omega$ be a translation surface. We say that $\omega$ has \emph{superdense} linear flow $\Phi_t$ if there exists a constant $C > 0$ such that for every $T > 0$ where the flow is defined, the segment of the flow $\Phi_t$ for $t \in [0,T]$ is within $\frac{C}{T}$ to every point on $\omega$. Equivalently, the segment of the flow $\Phi_t$ for $t \in [0,CT]$ is $\frac{1}{T}$-dense on the surface. 
 \end{definition}
 
 Beck and Chen show that a linear flow on a square-tiled surface is superdense if and only if the slope in the associated direction is a badly-approximable number. 

 We give the following generalization.

 \begin{theorem}\label{thm:main}
 Let $\omega \in \Omega_g$ be a translation surface. The linear flow on $\omega$ is superdense if the associated Teichm\"uller geodesic $\{g_t\omega\}_{t \geq 0}$ is bounded in $\Omega_g$. Conversely, if $\omega$ has a superdense linear flow, then the diameter of $g_t\omega$ is bounded for all $t \geq 0$. 
 \end{theorem}

 We prove this result using the \emph{diameter} and \emph{dilatation} of the translation surface to control the quantitative density of the vertical (northward) linear flow. 
 
 As a corollary, since the diameter function is a proper function on the $SL_2(\R)$-orbit closures associated with a lattice surface, we have the following.

 \begin{corollary}
 Let $\omega \in \Omega_g$ be a lattice surface. The linear flow on $\omega$ is superdense if and only if the associated Teichm\"uller geodesic $\{g_t\omega\}_{t>0}$ is bounded in $\Omega_g$.
 \end{corollary}

 Moreover, in relation to Masur's criterion, we have the following. 
 
 \begin{corollary}
 If the linear flow on $\omega$ is superdense, it is uniquely ergodic. However, uniquely ergodic flows need not be superdense. 
 \end{corollary}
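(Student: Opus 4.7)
The plan is to derive both halves of the corollary as formal consequences of Theorem~\ref{main} together with the two external results recalled in the introduction: Masur's Criterion and Beck--Chen's characterization of superdensity on square-tiled surfaces.

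For the forward direction, I would simply chain implications. If the linear flow on $\omega$ is superdense, then by Theorem~\ref{main} the forward Teichm\"uller geodesic $\{g_t\omega\}_{t>0}$ is bounded in $\Omega_g$; in particular it stays in a fixed compact set and is non-divergent, so Masur's Criterion yields unique ergodicity of the vertical linear flow. This is a one-line argument with no subtlety.

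For the failure of the converse, I would exhibit a translation surface whose vertical flow is uniquely ergodic but whose forward Teichm\"uller geodesic is unbounded; Theorem~\ref{main} then rules out superdensity. The cleanest source is a square-tiled surface. Fix a square-tiled $\omega$ and pick a direction whose slope is irrational but not badly approximable. By Beck--Chen the flow in this direction is not superdense. By the Veech dichotomy (applicable because a square-tiled surface is a Veech surface whose completely periodic directions are exactly the rational-slope directions), this flow is uniquely ergodic. Rotating $\omega$ by an element of $SO(2) \subset SL_2(\R)$ so that the chosen direction becomes vertical produces the required witness, and such slopes exist in abundance since Lebesgue-almost every irrational fails the badly-approximable condition.

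The only step that demands even a second's thought is the claim that every irrational-slope direction on a square-tiled surface is uniquely ergodic, which is the standard Veech dichotomy for a lattice surface whose Veech group is commensurable with $SL_2(\Z)$. There is no substantive obstacle; the corollary is a bookkeeping consequence of Theorem~\ref{main}, Masur's Criterion, and Beck--Chen.
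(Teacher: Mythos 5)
The paper never writes out a proof of this corollary, so there is nothing to match line by line; judged on its own, your argument is correct. The first half is exactly the intended one-liner: superdensity gives a bounded forward geodesic by Theorem~\ref{main}, boundedness trivially implies non-divergence, and Masur's Criterion finishes. For the second half, the paper's surrounding text points at two other routes: the Cheung--Masur example of a \emph{divergent} Teichm\"uller geodesic with uniquely ergodic vertical foliation (though note that example is a half-translation surface, so invoking it verbatim requires a word about passing to the orientation double cover), and, implicitly, the genericity argument --- by Kerckhoff--Masur--Smillie almost every direction on any translation surface is uniquely ergodic, while the directions with bounded geodesic have measure zero (cf.\ the Kleinbock--Weiss reference in the bibliography), so almost every direction already witnesses the failure of the converse. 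Your square-tiled route via the Veech dichotomy and Beck--Chen is a third, perfectly valid instantiation; it has the virtue of being completely explicit (any irrational, not badly approximable slope works) and of tying the corollary back to the Beck--Chen result that motivated the paper, at the cost of importing the Veech dichotomy, which the paper never uses. One stylistic remark: you do not actually need Beck--Chen for non-superdensity --- once the slope is not badly approximable, the geodesic $\{g_t r_\theta\omega\}_{t>0}$ is unbounded by the standard Dani-type correspondence for $SL_2(\Z)$, and Theorem~\ref{main} itself rules out superdensity, keeping the whole corollary internal to the paper plus classical unique-ergodicity results.
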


 \subsection{Related results}
 
 There have been a number of results that help explain the phenomenon described in Masur's criterion. For instance, Cheung and Masur constructed a half-translation surface (where we allow side identifications by translation and rotation by $\pi$) whose vertical flow is uniquely ergodic and the corresponding geodesic in the moduli space of Riemann surfaces diverges to infinity~\cite{CM06}. Not long after, Cheung and Eskin showed that if the geodesic diverges to infinity slowly enough, then the vertical linear flow is guaranteed to be uniquely ergodic~\cite{CE07}. 
 
 Similar to our result, Chaika and Trevi\~no found a closely related condition derived from the \emph{flat geometry} of the surface that implies unique ergodicity of the vertical linear flow on a translation surface~\cite{CT17},~\cite{T14}. Let $\delta(g_t \omega)$ be the systole on $g_t \omega$, by which we mean the shortest length of a non-contractible set of saddle connections. If $\int_0^{\infty} \delta^2(g_t \omega)\, dt$ diverges, then the vertical linear flow is uniquely ergodic. In short, the length of the shortest contractible set of saddle connections cannot get too short, too quickly. The geodesic must stay sufficiently far from the boundary of the moduli space (in some compact set) for a sufficient amount of time. 
 
 Our result differs in that, at least for lattice surfaces, it identifies a condition on the forward-time geodesic in moduli space that is equivalent to a quantitative density condition on the linear flow. Moreover, the work of Beck and Chen shows us that the slopes of these trajectories are badly approximable, meaning, superdense trajectories are as far as possible from (closed) trajectories of rational slopes. This seems to be a shadow of a philosophy being developed in homogeneous dynamics. Recently Lindenstrauss, Margulis, Mohammadi, and Shah gave effective bounds on time that the unipotent flow can spend near homogeneous subvarieties of an arithmetic quotient $G / \Gamma$~\cite{LMMS19}. This has become a tool for proving quantitative density statements about unipotent flows in this setting~\cite{LMW24}. Moreover, our results are akin to other results in homogeneous dynamics that seek to quantify the density of orbits. For example, the quantitative version of the Oppenheim conjecture seeks to give explicit quantitative information about the density of the orbits of unipotent flows~\cite{EMM98},~\cite{EMM05},~\cite{MM11}.
 
 
 \subsection{Acknowledgements}\label{acknowledgements}
 
 The author thanks Jayadev Athreya for proposing this question and providing guidance, and Matt Bainbridge, Jon Chaika, and Dami Lee for helpful discussions. The author would also like to thank the referee for helpful comments. 

 \section{Superdensity implies bounded diameter}

 \begin{lemma}\label{lem:superdense-implies-bounded} If the vertical (north or south) linear flow on $\omega$ is superdense, then the diameter of $g_t\omega$ is bounded for all $t \geq 0$.
 \end{lemma}

 \begin{proof} Let $\omega$ be such that the vertical linear flow $\Phi_s$ is superdense. Then, for any initial point on the surface, there exists a constant $C$ such that for any $T>0$, the vertical (north or south) segment $\gamma := \{\Phi_s(x) : s \in [0, CT]\}$ of length $CT$ is within $\frac{1}{T}$ of every point on $\omega$. Let $U = \{x \in \omega : \mathrm{dist}(x,\gamma) < \frac{1}{T}\}$ and note that $U$ is the entire surface. In other words, there exists a polygon in the plane $\C$ such that $\{ x \in \C: d(x,\gamma) < \frac{1}{T} \}$ covers the polygonal representation of the translation surface.

 Let $\tilde{t} \in [1, \infty)$ and apply $g_{\log(\tilde{t})}$ to the polygon. Notice that for every $\tilde{t}$, we have a cover of $g_{\log(\tilde{t})}\omega$. The diameter of $g_{\log(\tilde{t})}\omega$ is bounded by either $D=\sqrt{\left(\frac{cT}{\tilde{t}}\right)^2 + \left(\frac{2\tilde{t}}{T}\right)^2}$ or $D^{\prime} = \frac{cT}{\tilde{t}} + \frac{2}{T\tilde{t}}$. See Figure \ref{fig3}.
 
  \begin{figure}[h]
  \centering
  \def\svgwidth{\columnwidth}
    \rotatebox{0}{\scalebox{0.75}{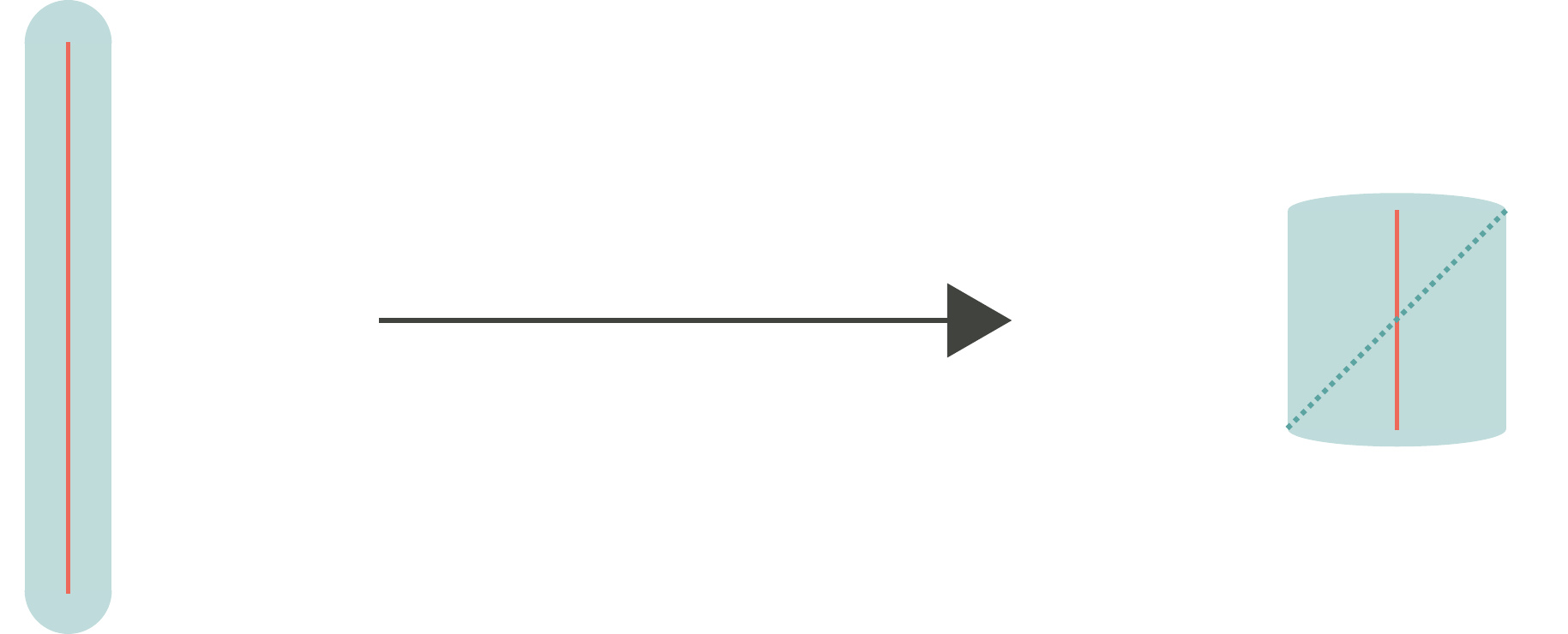}}
  \caption{Apply $g_{\log (\tilde{t})}$ to $U$}
  \label{fig3} 
 \end{figure}

 If $D \geq D^{\prime}$, we can pick $T = \frac{\sqrt{2}\tilde{t}}{\sqrt{c}}$ and note that the diameter is bounded by $4c$. If $D^{\prime} > D$, we can find a $T > 0$ such that the diameter is bounded by $c+2$.

 Now, let $t = \log{\tilde{t}}$, and our argument shows that for any $t > 0$, the diameter is bounded by $\max\{4c,c+2\}$. 
 \end{proof}

 Lemma \ref{thm:superdense-implies-bounded} is strict in the following sense: there exists a translation surface whose diameter remains bounded along the corresponding Teichm\"uller geodesic, but the geodesic leaves every compact set. In other words, superdensity on a general translation surface is not equivalent to boundedness of the forward-time Teichm\"uller geodesic in the moduli space. 

 Consider the following slit-torus construction. Take any two square tori, and rotate each torus so that the geodesic flow is recurrent. (Pick a direction associated with the contracting eigendirection of an affine diffeomorphism.) In the vertical direction of each torus, cut a small slit, small enough that when we apply the affine diffeomorphism, and then cut and reglue, we do not need to cut the slit. Glue the two tori together along these slits. Rescale the slit torus so that the total area is 1, and call the resulting translation surface $\omega$. See Figure \ref{fig4}. 

   \begin{figure}[h]
  \centering
  \def\svgwidth{\columnwidth}
    \rotatebox{0}{\scalebox{0.75}{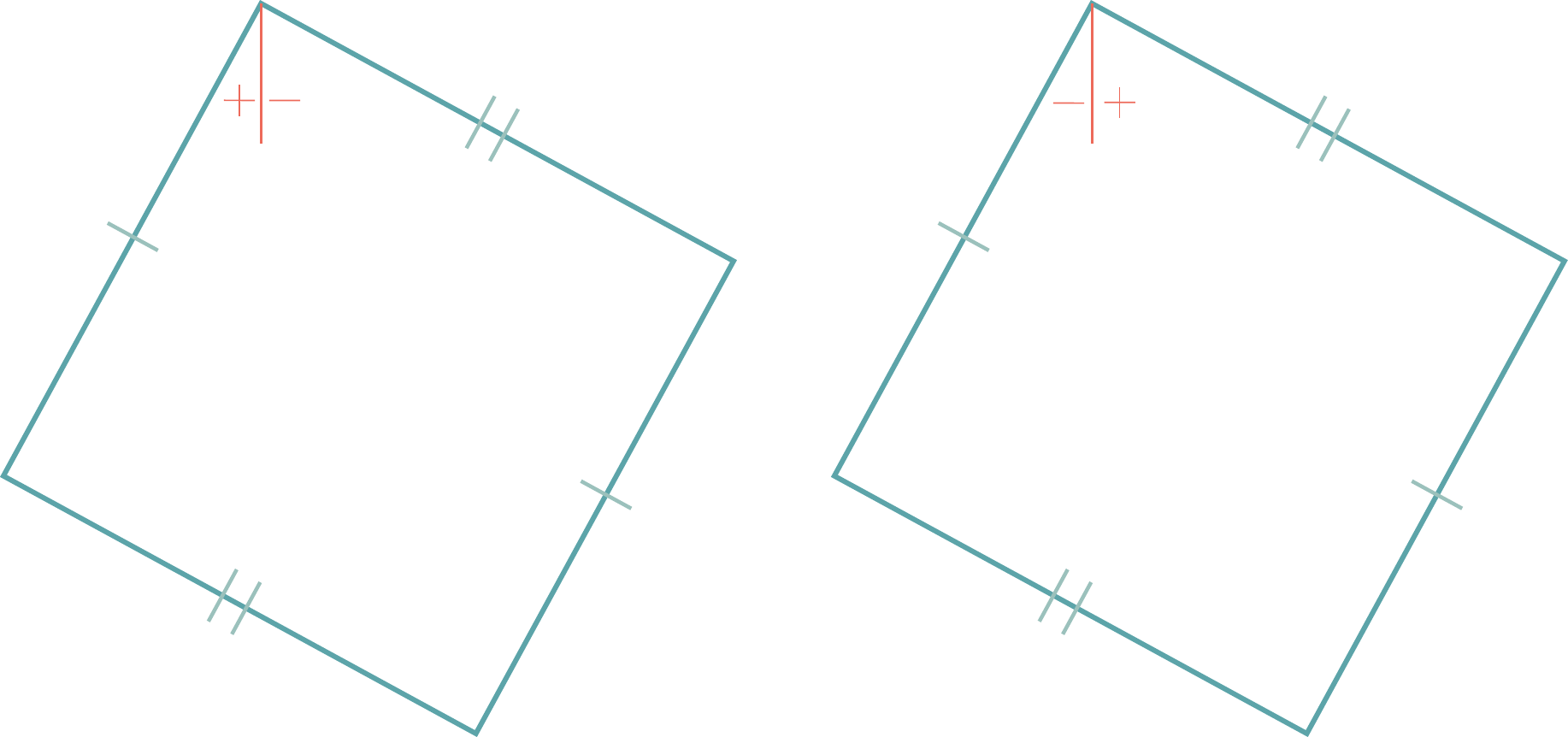}}
  \caption{Slit-torus construction}
  \label{fig4} 
 \end{figure}
 
 Flowing along the geodesic flow from this surface has the effect of warping each of the tori independently, but periodically returning to the square torus on each side of the slit. However, the slit itself contracts so that in the limit, the slit becomes a point and the surface degenerates to a wedge product of two tori.

 \section{Bounded in the moduli space implies superdensity}

 \begin{lemma}\label{lem:bounded-in-moduli-space-implies-superdense}
 Let $\omega$ denote a translation surface and $g_t$ the geodesic flow. If there exists a compact set $K$ such that $g_t \omega \in K$ for all $t>0$, then the vertical (north or south) linear flow on $\omega$ is superdense. 
 \end{lemma}

 \begin{proof} 
 Since $\{g_t \omega\}_{t\geq 0}$ is contained in a compact set in $\Omega_g$, the (continuous) projection of this set in $\mathcal{M}_g$ is compact. Let $X_t$ be the projection of $g_t\omega$. 
 
 Let $\mathcal{T}_g$ be the universal cover (Teichm\"uller space of genus $g$ Riemann surfaces) of $\mathcal{M}_g$, and choose a section $\sigma: \mathcal{M}_g \to \mathcal{T}_g$. Then, for any $t$, $\sigma(X_0)$ and $\sigma(X_t)$ are bounded distance from each other in $\mathcal{T}_g$. This means that for all $t$, $d_T(\sigma(X_0), \sigma(X_t))$ is bounded, where $d_T$ denotes the Teichm\"uller metric. As a consequence, the dilatation of the Teichm\"uller map between $X_0$ and $X_t$ is bounded for all $t$. Let $\phi_t: X_t \to X_0$ denote the inverse of that diffeomorphism, and let $K_{\omega}$ be the bound on the dilatation of this diffeomorphism. Note that in the flat coordinates induced by $\omega$ and $\omega_t$, the map $\phi_t$ stretches line segments in any direction less than $\sqrt{K_{\omega}}$.    

 
 Now, let $\gamma \subset \omega$ be a length $L$ segment of the vertical linear flow which avoids cone points. Let $D$ be the diameter of $\omega$. Fix some $\varepsilon > 0$ and a subsegment of $\gamma$ called $\gamma_{\varepsilon}$ of length $\varepsilon L$, where $\gamma_{\varepsilon}$ is centered in $\gamma$ (both share the same midpoint).  
 
 Apply $g_t$ for $t = \log\left(\frac{L\sqrt{K_{\omega}}}{D}\right)$. The length of the segment $g_t \gamma \subset g_t \omega$ is $e^{-t}L$. 

 Since the dilatation of $\phi_t$ is bounded by $K_{\omega}$, we know that the set $\{ x \in X_t: d(x,g_t \gamma_{\varepsilon}) < \frac{1}{2} D \sqrt{K_{\omega}}\}$, where $d$ is the flat distance, is the entire surface. In other words, there exists a polygon in the plane $\C$ such that $\{ x \in \C: d(x,g_t\gamma) < \frac{1}{2} D \sqrt{K_{\omega}}\}$ covers the polygonal representation of the translation surface. 

 Apply $g_t^{-1}$ to this polygon. 

 \begin{figure}[h]
  \centering
  \def\svgwidth{\columnwidth}
    \rotatebox{0}{\scalebox{0.6}{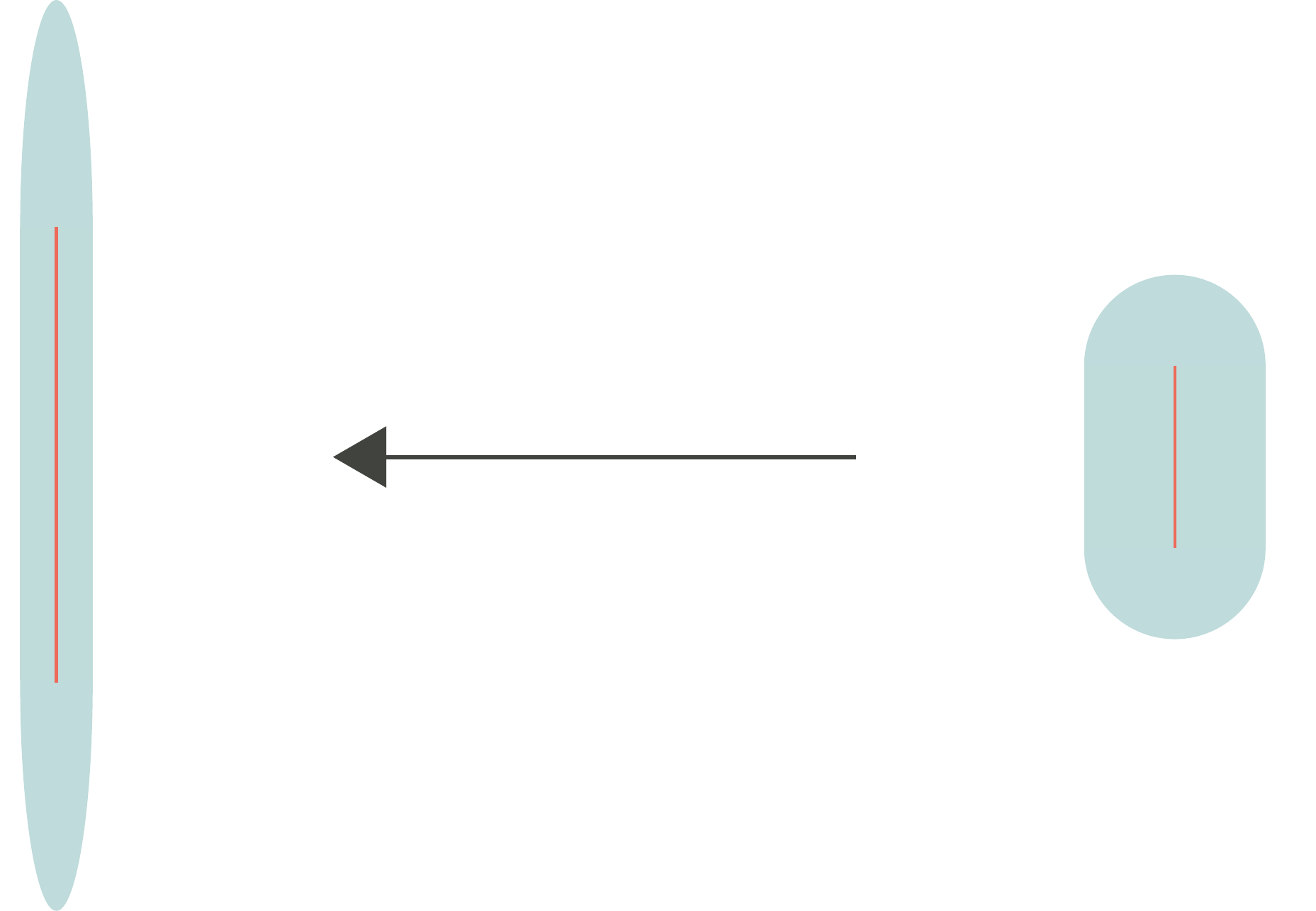}}
  \caption{Apply $g_{-t}$ to polygon}
  \label{fig5} 
 \end{figure} 
 
 The linear segment $\gamma$ returns to its original length, L, and the maximal width of the polygon becomes $$e^{-t} \cdot \frac{1}{2}\sqrt{K_{\omega}}D = \frac{KD^2}{2L} \left(\frac{1}{1-\varepsilon}\right).$$ Moreover, the maximal vertical length of the polygon is $L$. Indeed, 

 \begin{align*}
 \varepsilon L + \frac{1}{2}e^t\sqrt{K_{\omega}}D + \frac{1}{2}e^t\sqrt{K_{\omega}}D &= \varepsilon L + e^t\sqrt{K_{\omega}}D \\
 &= \varepsilon L + (1-\varepsilon)L \\
 &= L \text{.}
 \end{align*}

 Then, the segment $L$ is within $\frac{KD^2}{2L} \left(\frac{1}{1-\varepsilon}\right)$ of every point on the surface. We can pick $C = \frac{K_{\omega}D^2}{2}\cdot \frac{1}{1 - \varepsilon}$. Since the segment was arbitrary, we have that any length $L$ segment is within $\frac{C}{L}$ of every point on the surface, as desired. 
 \end{proof}

 A posteriori, we see that we can choose $C$ strictly larger than half of the square of the diameter of $\omega$ times $K_{\omega}$, the bound on the dilatation. 

 Lemma \ref{lem:superdense-implies-bounded} and Lemma \ref{lem:bounded-in-moduli-space-implies-superdense} imply the Theorem \ref{thm:main}.

\begin{bibdiv}
 \begin{biblist}


 \bib{BC21}{article}{
   title={Generalization of a density theorem of Khinchin and diophantine approximation},
   author={Beck, J.},
   author={Chen, W. W. L.},
   journal={J. Théor. Nombres Bordx.},
   volume={35},
   number={2}
   pages={511-542},
   date={2023}
   }
   
 \bib{BC22}{article}{
   title={Super-fast spreading of billiard orbits in rational polygons and geodesics on translation surfaces},
   author={Beck, J.},
   author={Chen, W.},
   journal={preprint},
   date={2022}
   }
   
 \bib{CT17}{article}{
   title={Logarithmic laws and unique ergodicity},
   author={Chaika, J.},
   author={Trevi\~no, R.},
   journal={Journal of Modern Dynamics},
   volume={11 (1)},
   pages={563-588},
   date={2017}
   }

 \bib{CE07}{article}{
   title={Unique Ergodicity of Translation Flows},
   author={Cheung, Y.},
   author={Eskin, A.},
   journal={Fields Institute Communications},
   volume={51},
   pages={213-222},
   date={2007}
   }
   
\bib{CM06}{article}{
   title={A divergent Teichm\"uller geodesic with uniquely ergodic vertical foliation},
   author={Cheung, Y.},
   author={Masur, H.},
   journal={Israel Journal of Mathematics},
   volume={152(1)},
   pages={1-15},
   date={2006}
   }

\bib{EMM98}{article}{
   title={Upper bounds and asymptotics in a quantitative version of the Oppenheim conjecture},
   author={Eskin, A.},
   author={Margulis, G.},
   author={Mozes, S.},
   journal={Annals of Mathematics},
   volume={147},
   pages={93-141},
   date={1998}
   }
   
\bib{EMM05}{article}{
   title={Quadratic forms of signature (2,2) and eigenvalue spacings on regular 2-tori},
   author={Eskin, A.},
   author={Margulis, G.},
   author={Mozes, S.},
   journal={Annals of Mathematics},
   volume={161},
   pages={679-725},
   date={2005}
   }
   

\bib{FK36}{article}{
   title={Concerning the transitive properties of geodesics on a rational polyhedron},
   author={Fox, R.},
   author={Kershner, R.},
   journal={Duke Math. J.},
   volume={2},
   pages={147-150},
   date={1936}
   }

\bib{KZ75}{article}{
   title={Topological transitivity of billiards in polygons},
   author={Katok, A.},
   author={Zemlyakov, A.},
   journal={Mat. Zametki},
   volume={18},
   pages={291-300},
   date={1975}
   }

\bib{KMS86}{article}{
   title={Interval exchange transformations and measured foliations},
   author={Kerckhoff, S.},
   author={Masur, H.},
   author={Smillie, J.},
   journal={Annals of Mathematics},
   volume={124},
   pages={293-311},
   date={1986}
   }

   
\bib{LMMS19}{article}{
   title={Quantitative behavior of unipotent flows and an effective avoidance principle},
   author={Lindenstrauss, E.},
   author={Margulis, G.},
   author={Mohammadi, A.},
   author={Shah, N.},
   journal={JAMA},
   pages={1-61},
   date={2023}
   }

\bib{LMW24}{article}{
   title={Effective equidistribution for some one parameter unipotent flows},
   author={Lindenstrauss, E.},
   author={Mohammadi, A.},
   author={Wang, Z.},
   journal={preprint},
   date={2024}
   }

\bib{MM11}{article}{
   title={Quantitative version of the Oppenheim conjecture for inhomogeneous quadratic forms},
   author={Margulis, G.},
   author={Mohammadi, A.},
   journal={Duke Mathematical Journal},
   volume={158(1)},
   pages={121-160},
   date={2011}
   }
   

\bib{M92}{article}{
   title={Hausdorff dimension of the set of nonergodic folitations of a quadratic differential},
   author={Masur, H.},
   journal={Duke Mathematical Journal},
   volume={66, no.3},
   pages={387-442},
   date={1992}
   }
   
\bib{MS91}{article}{
   title={Hausdorff Dimension of sets of nonergodic measured foliations},
   author={Masur, H.},
   author={Smillie, J.},
   journal={Annals of Mathematics},
   volume={134},
   pages={455-543},
   date={1991}
   }
   
\bib{T14}{article}{
   title={On the ergodicity of flat surfaces of finite area},
   author={Trevi\~no, R.},
   journal={Geometric and Functional Analysis},
   volume={24},
   pages={360-386},
   date={2014}
   }
   
   
\bib{Wri15}{article}{
   title={Translation Surfaces and their Orbit Closures},
   subtitle={An Introduction for a Broad Audience},
   author={Wright, A.},
   journal={EMS Surv. Math. Sci.},
   date={2015}
   }
   
 \bib{Zor06}{article}{
   title={Flat Surfaces},
   author={Zorich, A.},
   journal={Frontiers in Number Theory, Physics, and Geometry},
   volume={Vol. 1},
   date={2006}
   }

 \end{biblist}
 \end{bibdiv}

\end{document}